\newtheorem{thm}{Theorem}[section]
\newtheorem{cor}[thm]{Corollary}
\theoremstyle{definition}
\newtheorem{rem}[thm]{Remark}
\newtheorem{defi}[thm]{Definition}
\newtheorem{example}[thm]{Example}
\begin{document}
\vspace*{0.5cm}

\begin{center}
{\Large\bf Transitive regular $q$-analogs of graphs}
\end{center}

\vspace*{0.5cm}

\begin{center}
        Dean Crnkovi\'c  (deanc@math.uniri.hr)\\
        Vedrana Mikuli\' c Crnkovi\' c (vmikulic@math.uniri.hr)\\				
		Andrea \v Svob (asvob@math.uniri.hr)\\
		and \\
		Matea Zubovi\' c \v Zutolija (matea.zubovic@math.uniri.hr)\\[3pt]
		{\it\small Faculty of Mathematics, University of Rijeka} \\
		{\it\small Radmile Matej\v ci\'c 2, Rijeka, Croatia}\\
\end{center}

\vspace*{0.2cm}


\vspace*{0.5cm}

\begin{abstract}
In 1976, Delsarte introduced the notion of $q$-analogs of designs, and $q$-analogs of graphs were introduced recently by M. Braun et al.
In this paper, we extend that study by giving a method for constructing transitive regular $q$-analogs of graphs. Further, we illustrate the method by giving some examples.
Additionally, we introduced the notion of $q$-analogs of quasi-strongly regular graphs and give examples of transitive $q$-analogs of quasi-strongly regular graphs coming from spreads. 
\end{abstract}

\bigskip

\noindent
{\bf Keywords:}  \small $q$-ary design, $q$-ary graph, regular graph, transitive group. 

\noindent
{\bf AMS Mathematics Subject Classification 2020:} 05B05, 05E18

\section{Introduction}

In \cite{delsarte}, Delsarte introduced the notion of $q$-analogs of designs. Let $\mathbb{F}_q^v$ be a vector space of dimension $v$ over a finite field $\mathbb{F}_q$. A subspace
of vector space $\mathbb{F}_q^v$ of dimension $k$ will be called a $k$-subspace. A $t$-$(v,k,\lambda;q)$ subspace design
consists of a collection $\mathcal{B}$ of $k$-subspaces of $\mathbb{F}_q^v$, called blocks, such that each $t$-subspace of $\mathbb{F}_q^v$ is contained in exactly $\lambda$ blocks. 

A vector space partition of $\mathbb{F}_q^v$ is a set $S$ of $t$-subspaces, $t \ge 2$, such that each 1-dimensional subspace of $\mathbb{F}_q^v$ is contained in precisely one element of $S$. 
Let $S$ be a vector space partition consisting of $t$-subspaces and let $\mathcal{B}$ be a collection of $k$-dimensional subspace of $\mathbb{F}_q^v$. A triple $(\mathbb{F}_q^v,S,\mathcal{B})$ is a 
$(v,t,k, \lambda)$ group divisible design over $\mathbb{F}_q$, or a $(v,t,k, \lambda;q)$ group divisible design, if any 2-dimensional subspace of $\mathbb{F}_q^v$ is either contained in exactly one member of $S$ or contained in exactly $\lambda$ members of $\mathcal{B}$, but not both. The elements of $\mathcal{B}$ are called blocks.

A 2-design or a group divisible design over a field is said to be simple if it does not have repeated blocks.

The group of all linear bijective mappings on $\mathbb{F}_q^v$ is denoted by $GL(v,q)$, and the group of all semilinear bijective mappings on $\mathbb{F}_q^v$ is denoted by $\Gamma L(v,q)$. It holds that $\Gamma L(v,q) \cong GL(v,q) : Aut(\mathbb{F}_q)$, a semidirect product of $GL(v,q)$ and $Aut(\mathbb{F}_q)$, where $Aut(\mathbb{F}_q)$ is the full automorphism group of the field 
$\mathbb{F}_q$. The groups $GL(v,q)$ and $\Gamma L(v,q)$ act faithfully on the vectors, but not on the subspaces of $\mathbb{F}_q^v$. However, the groups $PGL(v,q)=GL(v,q)/C$ and 
$P \Gamma L(v,q)= \Gamma L(v,q) / C$, where $C= \{ x \mapsto \alpha x : \alpha \in \mathbb{F}_q \}$, act faithfully on the subspaces. 
It holds that $P \Gamma L(v,q) \cong PGL(v,q) : Aut(\mathbb{F}_q)$. 

Let $\mathcal{D}=(V,\mathcal{B})$ be a subspace design which consists of a set $\mathcal{B}$ of $k$-subspaces of $V=\mathbb{F}_q^v$.
An automorphism of $\mathcal{D}$ is a bijective mapping $g \in P\Gamma L(v,q)$ which maps blocks to blocks.
The set of all automorphisms of a subspace design $\mathcal{D}$ forms a group under the composition of mappings, called the full automorphism group of $\mathcal{D}$ and denoted by $Aut(\mathcal{D})$ (see \cite{computational}). Any subgroup of $Aut(\mathcal{D})$ is called an automorphism group of $\mathcal{D}$.

In 1987, Thomas (see \cite{thomas}) constructed the first non-trivial $q$-analogs of design with parameters 
$2$-$(n,3,7;2),\ n>6, n=6k+1$ or $n=6k-1$. Further, Suzuki constructed $2$-$(n,3,q^2+q+1;q)$ design, $n>6, n=6k+1$ or $n=6k-1$ (see \cite{suz1,suz2}), 
and in \cite{mune}, Miyakawa, Munemasa, and Yoshiara gave a classification of $2$-designs for which $n\in \{6,7\},\ q\in \{2,3\},\ k=3$, 
having an automorphism group that acts transitively on the 1-subspaces.

Due to applications of $q$-analogs of designs to network coding and distributed storage, the interest in this subject has been renewed recently (see \cite{mario-subspace}). 
One of the most important recent results was given in \cite{qsteiner}, where the authors constructed designs over a finite field with parameters $2$-$(13,3,1;2)$ which were the first known examples of Steiner $q$-designs that do not arise from spreads. Further, in \cite{buratti} the authors apply difference methods to prove that there exists a cyclic 2-$(n,3,7;2)$ design for every odd positive integer $n$, which improves the result of Thomas given in \cite{thomas}. In \cite{buratti}, it is also shown that there exists a cyclic and simple $(n,3,3,7;2)$ group divisible design for every integer 
$n \equiv 3 \ (mod \ 6)$, which is the first construction of an infinite family of cyclic group divisible designs over a finite field. 

Recently, $q$-analogs of graphs were introduced and $q$-ary strongly regular graphs were studied (see \cite{q-srg}). 
A $q$-ary graph $\mathcal{E}$ in is a pair $(V,E)$, where $V={\mathbb{F}_q^v \brack 1}$ and $E \subseteq {\mathbb{F}_q^v \brack 2}$, where ${\mathbb{F}_q^v \brack k}$ is the set of all 
$k$-dimensional subspaces of $\mathbb{F}_q^v$. The elements of $V$ and $E$ are called vertices and edges, respectively. If there is an edge $x \oplus y$, where $x$ and $y$ are 1-subspaces of $\mathbb{F}_q^v$, then we say that $x$ and $y$ are adjacent. For a vertex $x$ of a $q$-ary graph ${\mathcal E}$ we define its neighborhood $N_{{\mathcal E}}(x)$ as the set of all 1-subspaces adjacent to it, and itself.
A $q$-ary graph ${\mathcal E}$ is called $k$-regular if for any vertex $x$ the neighborhood $N_{{\mathcal E}}(x)$ is a $(k+1)$-dimensional subspace. As shown in \cite[Theorem 3.16.]{q-srg}, a $k$-regular
$q$-ary graph in $\mathbb{F}_q^v$ yields a classical $([k+1]_q-1)$-regular graph on $[v]_q$ vertices, where $[n]_q=\frac{q^n-1}{q-1}$.
Further, the set of all neighborhoods of a $k$-regular $q$-ary graph in $\mathbb{F}_q^v$ form a $q$-ary design with parameters 1-$(v,k+1,[k+1]_q;q)$.
It is a so-called neighborhood design of a regular $q$-ary graph. 

Let $\mathcal{E}=(V,E)$ be a $q$-ary graph in $\mathbb{F}_q^v$. An automorphism of $\mathcal{E}$ is a bijective mapping $g \in P\Gamma L(v,q)$ which maps edges to edges.
The set of all automorphisms of $\mathcal{E}$ forms a group called the full automorphism group of $\mathcal{E}$, denoted by $Aut(\mathcal{E})$.
Any subgroup of $Aut(\mathcal{E})$ is called an automorphism group of $\mathcal{E}$.
Note that every automorphism of a $q$-ary graph yields an automorphism of the corresponding classical graph. The converse is not true, since an automorphism of a $q$-ary graph is determined by its action on a basis of the vector space, while automorphisms of classical graphs do not have this constraint. A similar statement holds for $q$-ary designs and corresponding classical designs.

In this paper, we present a method of constructing regular $q$-ary graphs that have automorphism groups acting transitively on the 1-subspaces.
Further, we introduced the notion of $q$-analogs of quasi-strongly regular graphs and give examples of transitive $q$-analogs of quasi-strongly regular graphs coming from spreads.

\section{Constructing transitive $q$-ary graphs}

A method of constructing primitive designs and graphs is given in \cite{km}. The following generalization of the method by Key and Moori is given in \cite{cms}.

\begin{thm} \label{t-des}
Let $G$ be a finite permutation group acting transitively on the sets $\Omega_1$ and $\Omega_2$
of size $m$ and $n$, respectively.
Let $\alpha \in \Omega_1$ and $\Delta_2 =  \bigcup_{i=1}^s \delta_i G_{\alpha}$, where $G_{\alpha} = \{ g \in G \ | \ \alpha g = \alpha \}$ is the stabilizer of $\alpha$
and $\delta_1,...,\delta_s \in \Omega_2$ are representatives of distinct $G_\alpha$-orbits on $\Omega_2$.
If $\Delta_2 \neq \Omega_2$ and
$${\mathcal{B}}=\{ \Delta_2 g : g \in G \},$$
then ${\mathcal{D}}(G,\alpha,\delta_1,...,\delta_s)=(\Omega_2,{\mathcal{B}})$ is a
$1$-$(n, | \Delta_2 |, \frac{|G_{\alpha}|}{|G_{\Delta_2}|}\sum_{i=1}^{s} | \alpha G_{\delta_i} |)$ design with $\frac{m\cdot |G_{\alpha}|}{|G_{\Delta_2}|}$ blocks.
The group $H\cong G/{\bigcap_{x\in \Omega_2}G_x}$ acts as an automorphism group on $(\Omega_2,{\mathcal{B}})$, 
transitively on points and blocks of the design.
\end{thm}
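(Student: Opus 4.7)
The plan is to verify, in order: (i) every block has size $|\Delta_2|$, (ii) the total number of blocks is $m|G_\alpha|/|G_{\Delta_2}|$, (iii) every point of $\Omega_2$ lies in the same number $\lambda$ of blocks, matching the stated formula, and (iv) $H$ acts faithfully and transitively on both points and blocks. Parts (i) and (ii) are immediate: each $g \in G$ is a bijection on $\Omega_2$, so $|\Delta_2 g| = |\Delta_2|$; and $\mathcal{B}$ is a single $G$-orbit under the setwise action on subsets of $\Omega_2$, so by orbit-stabilizer $|\mathcal{B}| = |G|/|G_{\Delta_2}|$, while $|G| = m \cdot |G_\alpha|$ by the transitive action on $\Omega_1$.

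For (iii), I would fix $\gamma \in \Omega_2$ and count pairs $(g,\gamma)$ with $\gamma \in \Delta_2 g$. Since each block appears exactly $|G_{\Delta_2}|$ times in the list $\{\Delta_2 g : g \in G\}$, the replication number is $|G_{\Delta_2}|^{-1}\cdot |\{g \in G : \gamma \in \Delta_2 g\}|$. The condition $\gamma \in \Delta_2 g$ is equivalent to $\gamma g^{-1} \in \delta_i G_\alpha$ for a unique index $i$ (the orbits being disjoint). Choosing $g_0 \in G$ with $\delta_i g_0 = \gamma$, the equation $\gamma g^{-1} = \delta_i h$ with $h \in G_\alpha$ rearranges to $hg \in G_{\delta_i} g_0$, i.e., $g \in G_\alpha G_{\delta_i} g_0$. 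Hence $|\{g : \gamma \in \Delta_2 g\}| = \sum_{i=1}^s |G_\alpha G_{\delta_i}|$.

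The key identity needed to recover the stated formula is $|G_\alpha G_{\delta_i}| = |G_\alpha|\cdot |\alpha G_{\delta_i}|$. This follows since both sides equal $|G_\alpha|\cdot |G_{\delta_i}|/|G_\alpha \cap G_{\delta_i}|$: the left side by the product formula for subgroups of a finite group, and the right side by orbit-stabilizer applied to $G_{\delta_i}$ acting on $\Omega_1$ at the point $\alpha$ (whose stabilizer inside $G_{\delta_i}$ is $G_\alpha \cap G_{\delta_i}$). Substituting yields $\lambda = \frac{|G_\alpha|}{|G_{\Delta_2}|}\sum_{i=1}^s |\alpha G_{\delta_i}|$, as required.

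Finally, for (iv), the assignment $(\Delta_2 g)\cdot h = \Delta_2(gh)$ defines an action of $G$ on $\mathcal{B}$ that is compatible with its action on $\Omega_2$. The kernel of the combined action on $\Omega_2$ (and hence on $\mathcal{B}$) is precisely $\bigcap_{x \in \Omega_2} G_x$, so the quotient $H$ acts faithfully; block-transitivity holds by construction, and point-transitivity is inherited from $G$. The main obstacle is the manipulation in step (iii) that rewrites $\gamma \in \delta_i G_\alpha g$ as $g \in G_\alpha G_{\delta_i} g_0$, together with the identification $|G_\alpha G_{\delta_i}| = |G_\alpha|\cdot|\alpha G_{\delta_i}|$; once that identity is in place, the remainder is routine bookkeeping.
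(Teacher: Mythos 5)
The paper states this theorem without proof, importing it from \cite{cms}, so there is no in-paper argument to compare against; judged on its own, your proof is correct and is exactly the standard orbit-counting argument behind the Key--Moori-type constructions. All the delicate points are handled properly: the passage from group elements to distinct blocks via the fibres $G_{\Delta_2}g$, the disjointness of the sets $\{g : \gamma g^{-1} \in \delta_i G_\alpha\}$ over $i$, and the identity $|G_\alpha G_{\delta_i}| = |G_\alpha|\,|G_{\delta_i}|/|G_\alpha \cap G_{\delta_i}| = |G_\alpha|\cdot|\alpha G_{\delta_i}|$ combining the product formula with orbit-stabilizer for $G_{\delta_i}$ acting on $\Omega_1$. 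The only cosmetic omission is that the hypothesis $\Delta_2 \neq \Omega_2$ is there merely to rule out the degenerate single-block design, which costs nothing in your argument.
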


If $\Delta_2=\Omega_2$, then the set $\mathcal{B}$ consists of one block, and  ${\mathcal{D}}(G,\alpha,\delta_1,...,\delta_s)$ is a design with parameters $1$-$(n,n,1)$.
Suppose that the group $G$ acts transitively on the points and blocks of a simple $t$-$(v,k,\lambda)$ design. Then the design can be obtained as described in Theorem \ref{t-des}.
If $\Omega_1=\Omega_2$ and $\Delta_2$ is a union of self-paired and mutually paired orbits of $G_{\alpha}$, then the design 
${\mathcal{D}}(G,\alpha,\delta_1,...,\delta_s)$ is a symmetric self-dual design and the incidence matrix of that design is the 
adjacency matrix of a $|\Delta_2|-$regular graph. 

In \cite{construction}, we present a method for constructing $q$-analogs of 1-designs having an automorphism group that acts transitively both on the $1$-subspaces of the vector space $\mathbb{F}_q^v$ and on the set of blocks of the subspace designs. The same construction is given in Theorem \ref{main}.

\begin{thm}\label{main}
Let $G< P \Gamma L(v,q)$ be a group acting transitively on the set of $1$-subspaces of the vector space $\mathbb{F}_q^v$. 
Let $P$ be a subgroup of the group $G$ and $\Delta=\bigcup_{i=1}^s \delta_i P$, for $1$-spaces $\delta_1,..., \delta_s$ which are representatives of different $P$-orbits. 
If $\Delta$ is a subspace of $\mathbb{F}_q^v$, then $\mathcal{B}=\{\Delta g\ |\ g\in G\}$ is a set of blocks of a $q$-analog of design with parameters
$1$-$\left (v,k,\frac{|G|(q^k-1)}{|G_{\Delta}|(q^v-1)};q\right )$ and $\frac{|G|}{|G_{\Delta}|}$ blocks, where $k=\dim \Delta$.
The group $G$ acts as an automorphism group on the constructed $q$-analog of design, transitively on the $1$-subspaces of the vector space $\mathbb{F}_q^v$ and on the set of blocks of the design.
\end{thm}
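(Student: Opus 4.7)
The plan is to verify four items: that every translate $\Delta g$ is again a $k$-subspace, that the orbit $\mathcal{B}=G\cdot\Delta$ has cardinality $|G|/|G_\Delta|$, that $G$ acts transitively on blocks (and, by hypothesis, on $1$-subspaces) as automorphisms of the resulting incidence structure, and that every $1$-subspace lies in a common number $\lambda$ of blocks with $\lambda$ equal to the stated value. Only the last item requires any real computation; the first three are bookkeeping.

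Since $G\leq P\Gamma L(v,q)$ acts on the subspace lattice of $\mathbb{F}_q^v$ preserving dimension, and $\Delta$ is by hypothesis a $k$-subspace, each $\Delta g$ is again a $k$-subspace, so $\mathcal{B}$ is a set of $k$-subspaces. The orbit-stabilizer theorem gives $|\mathcal{B}|=|G\cdot\Delta|=|G|/|G_\Delta|$, and $G$ acts transitively on $\mathcal{B}$ by right multiplication. Point-transitivity on $1$-subspaces is given, so $G$ acts as an automorphism group of the incidence structure in the required manner.

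For the replication number I would use a standard transport-of-structure argument: if $y=xg$ for two $1$-subspaces $x,y$ and some $g\in G$, then $B\mapsto Bg$ is a bijection from the blocks through $x$ onto the blocks through $y$, so transitivity of $G$ on $1$-subspaces forces the count $\lambda$ to be independent of the chosen $1$-subspace. A double count of incident pairs $(x,B)$ with $x$ a $1$-subspace contained in a block $B\in\mathcal{B}$ then gives
\[
\frac{q^v-1}{q-1}\cdot\lambda \;=\; |\mathcal{B}|\cdot\frac{q^k-1}{q-1} \;=\; \frac{|G|}{|G_\Delta|}\cdot\frac{q^k-1}{q-1},
\]
and solving for $\lambda$ yields precisely $\lambda=\frac{|G|(q^k-1)}{|G_\Delta|(q^v-1)}$.

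No step constitutes a genuine obstacle: everything follows from orbit-stabilizer, transitivity, and a single incidence count. The only conceptual point that must be used in an essential way is the standing hypothesis that the union of $P$-orbits $\Delta$ is actually a subspace, for it is this that guarantees the blocks of $\mathcal{B}$ have a well-defined common dimension $k$ and hence that the incidence counts above are meaningful.
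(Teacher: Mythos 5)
Your proposal is correct and complete: orbit--stabilizer gives the block count $|G|/|G_\Delta|$, the $G$-invariance of $\mathcal{B}$ together with point-transitivity makes the replication number constant, and the double count of incident pairs $(x,B)$ yields $\lambda=\frac{|G|(q^k-1)}{|G_\Delta|(q^v-1)}$ exactly as stated. Note that the paper itself gives no proof of this theorem (it refers the reader to the earlier paper \cite{construction} where the construction originates), but your argument is the standard one for such orbit constructions --- it is the $q$-analog of the counting argument behind Theorem \ref{t-des} --- and you correctly isolate the one hypothesis doing real work, namely that the union of $P$-orbits $\Delta$ is a subspace, which is what makes $k=\dim\Delta$ and the count $[k]_q=\frac{q^k-1}{q-1}$ of $1$-subspaces per block well defined.
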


Note that $P \le G_{\Delta}$, where $P$ and $\Delta$ are defined as in Theorem \ref{main}.
Since $\Delta$ must be a subspace of $\mathbb{F}_q^v$, it can be constructed as $\Delta=\Sigma_{i=1}^s \langle \delta_iP \rangle$.
If the group $G$ acts transitively on the points and blocks of a simple design $t$-$(v,k,\lambda ; q)$, then the design can be obtained as described in Theorem \ref{main} (see \cite{construction}).

\begin{rem}
It is shown in \cite{dan-q-designs} that if $t \ge 2$ and $\mathcal{D}$ is a block-transitive $t$-$(v,k, \lambda ;q)$ design then $\mathcal{D}$ is trivial, that is, the set of blocks of $\mathcal{D}$ is the set of all $k$-dimensional subspaces of $\mathbb{F}_q^v$.
However, block-transitive $q$-ary 1-designs can be used as "building blocks" of $q$-ary $t$-designs, for $t \ge 2$. For example, the $q$-ary designs with parameters 2-(13,3,1;2) constructed in \cite{qsteiner} admit a transitive action of the normalizer $G$ of the Singer subgroup in $GL(13,2)$ on 1-subspaces, and $G$ acts in 15 orbits on blocks. Hence, each of these $q$-ary 2-designs consists of 15 block-transitive $q$-ary 1-designs, and each one of these transitive $q$-ary 1-designs can be constructed using Theorem \ref{main}.
\end{rem}

The following theorem gives us the method of constructing transitive $q$-ary graphs. 

\begin{thm} \label{thm-q-graphs}
Let $G<P \Gamma L(v,q)$ act transitively on the set of $1$-subspaces of the vector space $\mathbb{F}_q^v$. Further, let $P$ be the stabilizer of a $1$-dimensional subspace $\alpha$ for that action and 
$\Delta=\bigcup_{i=1}^s \delta_i P$, where $\delta_1= \alpha$ and $\delta_1, \ldots ,\delta_s$ are representatives of different $P$-orbits. Suppose that $\Delta$ is a subspace of $\mathbb{F}_q^v$.
Let $\mathcal{B}=\{\Delta g\ |\ g\in G\}$ be the set of blocks of the $q$-analog of $1$-design with the base block $\Delta$. 
If $\{ \alpha g : g \in G, \ \alpha g^{-1} \in \Delta \} = \Delta$, then $\mathcal{B}$ is a set of neighborhoods of a $(\dim \Delta -1)$-regular $q$-graph ${\mathcal E}$ and 
$\mathcal{B}'= \{ (\alpha g,\Delta g) \ |\ g\in G \}$ is the set of ordered pairs $(x, N_{{\mathcal E}}(x))$,  where $x \in V={\mathbb{F}_q^v \brack 1}$.
The group $G$ acts as an automorphism group on ${\mathcal E}$, transitively on the $1$-subspaces of the vector space $\mathbb{F}_q^v$ and on the set of neighborhoods of ${\mathcal E}$.
\end{thm}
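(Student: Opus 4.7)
My strategy is to first define a candidate neighborhood function $N \colon V \to \mathcal{B}$, derive the symmetry of the induced adjacency from the hypothesis, and then verify that the natural edge set recovers these as the $q$-ary graph neighborhoods.

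Since $\Delta$ is a subspace whose set of $1$-subspaces is the $P$-invariant union $\bigcup_i \delta_i P$, one has $P \le G_\Delta$. Consequently $N(\alpha g) := \Delta g$ is well-defined: if $\alpha g_1 = \alpha g_2$, then $g_1 g_2^{-1} \in P \le G_\Delta$ and $\Delta g_1 = \Delta g_2$. Clearly $x \in N(x)$ (as $\alpha \in \Delta$) and $N$ is $G$-equivariant, $N(xh) = N(x)h$. The main use of the hypothesis is to yield \emph{symmetry at} $\alpha$: since $\alpha g^{-1} \in \Delta$ is equivalent to $\alpha \in \Delta g = N(\alpha g)$, the set $\{\alpha g : \alpha g^{-1} \in \Delta\}$ equals $\{z \in V : \alpha \in N(z)\}$; equating this with $\Delta = N(\alpha)$ gives $y \in N(\alpha) \Longleftrightarrow \alpha \in N(y)$. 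Together with equivariance and transitivity, this extends to $y \in N(x) \Longleftrightarrow x \in N(y)$ for all $x, y \in V$.

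Now set $E := \{B \in {\mathbb{F}_q^v \brack 2} : B \subseteq N(z) \text{ for some } z \in B\}$ and $\mathcal{E} := (V,E)$. The crux is to show $N_{\mathcal{E}}(x) = N(x)$ for every vertex $x$. If $y \in N(x) \setminus \{x\}$, then the subspace $N(x)$ contains $x \oplus y$, so $x \oplus y \in E$. Conversely, assume $y \in N_{\mathcal{E}}(x) \setminus \{x\}$ and pick $z \in x \oplus y$ with $x \oplus y \subseteq N(z)$. If $z = x$ then $y \in N(x)$ directly; if $z = y$ then $x \in N(y)$ and symmetry gives $y \in N(x)$; otherwise $x \in N(z)$ implies $z \in N(x)$ by symmetry, so the subspace $N(x)$ contains $x \oplus z = x \oplus y$, again yielding $y \in N(x)$. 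Thus $N_{\mathcal{E}}(x)$ is the $(\dim \Delta)$-dimensional subspace $N(x)$, so $\mathcal{E}$ is $(\dim \Delta - 1)$-regular with neighborhood set exactly $\mathcal{B}$, and $\mathcal{B}'$ is the graph of $N$. The remaining assertions follow because $E$ is $G$-invariant by construction and $G$ acts transitively on $V$ and on $\mathcal{B} = \Delta G$.

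The main obstacle is the reverse inclusion in the consistency step just above: extracting $y \in N(x)$ from the weaker existential condition defining $E$. This is where the hypothesis pulls its full weight, used simultaneously through the symmetry of $N$ and through the subspace structure of each $N(x)$ inherited from $\Delta$.
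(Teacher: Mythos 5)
Your proof is correct and follows essentially the same approach as the paper's: identify $\Delta g$ as the neighborhood of $\alpha g$, derive the symmetry of adjacency from the stated condition, and conclude regularity and the $G$-action. The paper's own proof is a terse three-sentence version of this argument; your write-up usefully fills in the well-definedness of $N$, the propagation of symmetry from $\alpha$ to all vertices via equivariance, and the verification that the edge set recovers the prescribed neighborhoods, all of which the paper leaves implicit.
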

\begin{proof}
The subspace $\Delta$ is the neighborhood of $\alpha$, and $\Delta g$, $g \in G$, is the neighborhood of the 1-subspace $\alpha g$. The condition $\{ \alpha g : g \in G, \ \alpha g^{-1} \in \Delta \} = \Delta$ implies that $x \in N_{{\mathcal E}}(y)$ if and only if $y \in N_{{\mathcal E}}(x)$, for all 1-subspaces $x$ and $y$ of $\mathbb{F}_q^v$. Hence, $\mathcal{B}$ is the set of neighborhoods of a $(\dim \Delta -1)$-regular $q$-graph $\mathcal{E}$, and $G$ acts as an automorphism group of $\mathcal{E}$.
\end{proof}

Every transitive regular $q$-ary graph can be constructed as described in Theorem \ref{thm-q-graphs}.

\begin{cor} \label{cor-q-graphs}
Let ${\mathcal E}$ be a regular $q$-ary graph in $\mathbb{F}_q^v$ having an automorphism group $G$ that acts transitively on the 1-subspaces. 
Then ${\mathcal E}$ can be constructed by the method given in Theorem \ref{thm-q-graphs}.
\end{cor}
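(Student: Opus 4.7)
The plan is to take the canonical choice of parameters coming from the graph itself. Fix any 1-subspace $\alpha$ and set $P=G_\alpha$ and $\Delta=N_{\mathcal E}(\alpha)$. Since $\mathcal E$ is regular, $\Delta$ is a subspace of $\mathbb F_q^v$ (of dimension $k+1$ if $\mathcal E$ is $k$-regular) and contains $\alpha$. Because $P$ fixes $\alpha$ and every element of $G$ acts as an automorphism of $\mathcal E$, so $N_{\mathcal E}(x)g=N_{\mathcal E}(xg)$ for all $g\in G$, the subgroup $P$ stabilizes $\Delta$ setwise. Hence the 1-subspaces contained in $\Delta$ partition into $P$-orbits; choosing a representative $\delta_i$ from each, with $\delta_1=\alpha$, yields $\Delta=\bigcup_{i=1}^s \delta_i P$, which is exactly the shape of the base subspace required in Theorem \ref{thm-q-graphs}.

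Next I would verify that the associated collection $\mathcal B=\{\Delta g:g\in G\}$ is precisely the set of neighborhoods of $\mathcal E$. Since $G$ acts transitively on 1-subspaces and by automorphisms, every neighborhood equals $N_{\mathcal E}(\alpha g)=N_{\mathcal E}(\alpha)g=\Delta g$ for some $g\in G$, and conversely each $\Delta g$ is such a neighborhood. So $\mathcal B$ coincides with the neighborhood set of $\mathcal E$.

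Finally I would check the symmetry condition $\{\alpha g:g\in G,\ \alpha g^{-1}\in \Delta\}=\Delta$. The condition $\alpha g^{-1}\in \Delta=N_{\mathcal E}(\alpha)$ says that $\alpha g^{-1}$ lies in the neighborhood of $\alpha$; applying the automorphism $g$ and using symmetry of adjacency in a $q$-ary graph, this is equivalent to $\alpha g\in N_{\mathcal E}(\alpha)=\Delta$. Transitivity of $G$ on 1-subspaces then ensures that the collection $\{\alpha g:\alpha g\in\Delta\}$ exhausts all 1-subspaces of $\Delta$, giving the required equality. With every hypothesis of Theorem \ref{thm-q-graphs} in place and $\mathcal B$ equal to the neighborhood collection of $\mathcal E$, the method reconstructs $\mathcal E$.

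The main obstacle, such as it is, is conceptual rather than computational: one must carefully translate between the graph-theoretic symmetry of adjacency and the group-theoretic condition appearing in Theorem \ref{thm-q-graphs}, since these are phrased in different languages. Once that equivalence is established, what remains is routine bookkeeping on the $P$-orbit decomposition of the 1-subspaces of $\Delta$ and the use of $G$-transitivity to match the orbit $\{\Delta g:g\in G\}$ with the full neighborhood set.
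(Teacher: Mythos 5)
Your proposal is correct and follows essentially the same route as the paper's proof: choose $\alpha$, take $P=G_\alpha$ and $\Delta=N_{\mathcal E}(\alpha)$, observe that $\Delta$ is a union of $P$-orbits, verify the symmetry condition, and recover the remaining neighborhoods as $G$-images of $\Delta$. Your write-up simply spells out in more detail the verifications that the paper states without elaboration.
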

\begin{proof}
Let $\alpha$ be a 1-subspace of $\mathbb{F}_q^v$ and let $P$ be the stabilizer of $\alpha$ in $G$. Further, let $\Delta$ be the neighborhood of $\alpha$. Then $\{ \alpha g : g \in G, \ \alpha g^{-1} \in \Delta \} = \Delta$. Further, $\Delta$ is a union of $P$-orbits and $\Delta$ can be obtained as given in Theorem \ref{thm-q-graphs}. The neighborhoods of other 1-subspaces of $\mathbb{F}_q^v$ are $G$-images of $\Delta$, such that $\Delta g$ is the neighborhood of $\alpha g$, for $g \in G$.
\end{proof}

The strongly regular $q$-ary graphs given in Examples \ref{ex-symplectic} and \ref{ex-spread} are the only regular $q$-ary graphs that admit a transitive action of some 
$P \Gamma L(v,q)$, for $(v,q) \in \{(3,2), (3,3), (3,4), (4,2)\}$. The constructions are obtained by using GAP \cite{GAP2022} and the GAP package FinInG \cite{fining}.

\begin{example} \label{ex-symplectic}
The strongly regular $q$-ary graphs with parameters $(v, v-2, [v-2]_q -2, [v-2]_q; q)$, with $v$ even, given in \cite[Example 3.13]{q-srg}, admit actions of the symplectic groups $PSp(v,q)$, transitive on 1-subspaces. Hence, all the $q$-ary graphs from this family of strongly regular $q$-ary graphs can be obtained using the construction described in Theorem \ref{thm-q-graphs}.
\end{example}

\begin{example} \label{ex-spread}
In \cite[Example 3.12]{q-srg}, regular $q$-ary graphs arising from vector space partitions, i.e. spreads, are given. 
Let $S$ be a vector space partition of $\mathbb{F}_q^v$ consisting of $t$-subspaces, $t \ge 2$.
Such a vector space partition corresponds to a $(t -1)$-spread of $PG(v-1, q)$. 
The set of 2-subspaces contained in an element of $S$ forms a $q$-ary strongly regular graph ${\mathcal E}$ with parameters $(v, t-1, [t]_q-2, 0; q)$, that is a $q$-analog of a classical strongly regular graph on $v$ vertices which is a disjoint union of complete graphs on $t$ vertices. 
By applying Theorem \ref{thm-q-graphs} we get the neighborhoods of ${\mathcal E}$, and the set of edges is $\{x \oplus y : x \in {\mathbb{F}_q^v \brack 1},\ y \in N_{{\mathcal E}}(x) \setminus \{ x \} \}$.
The information on a construction of some small examples of $q$-ary strongly regular graphs $(v, t-1, [t]_q-2, 0; q)$ is given in Table \ref{table-disjoint}.
If $S$ is a Desarguesian vector space partition (see \cite{des-spreads}) and $G$ is a group stabilizing the partition setwise, then $G$ can be described as an extension of a group acting on the 1-subspaces of $\mathbb{F}^{v/t}_{q^t}$ and a group acting on the 1-subspaces of $\mathbb{F}^t_q$ (see \cite{gvdv}). The groups in Table \ref{table-disjoint} are written in that form. 

\begin{table}[H]
\centering
\begin{tabular}{|c|c|c|c|c|}
\hline
$v$&$q$& $G < P \Gamma L(v,q)$ & Stabilizer $P$ of a 1-subspace & Parameters\\
\hline
4 & 2 & $\mathbb{Z}_{3} : (\mathbb{Z}_{5} : \mathbb{Z}_4)$ &
$\mathbb{Z}_4$ & $(4,1,1,0;2)$\\
\hline
4 & 3 & $ \mathbb{Z}_4.((A_6 : \mathbb{Z}_2): \mathbb{Z}_2)$ &  $ (E_9
: \mathbb{Z}_8) :\mathbb{Z}_2$ & $(4,1,2,0;3)$\\
\hline
6 & 2 &  $\mathbb{Z}_3. ((PSL(3,4) : \mathbb{Z}_3):\mathbb{Z}_2)$ &
$E_{16} : (A_5 : S_3)$ & $(6,1,1,0;2)$\\
 & &   $\mathbb{Z}_7 : (PSL(2,8) : \mathbb{Z}_3 )$ & $E_8 :
(\mathbb{Z}_7 : \mathbb{Z}_3)$ & $(6,2,5,0;2)$\\
\hline
9 & 2 & $\mathbb{Z}_7 : (PSL(3,8) : \mathbb{Z}_3)$ & $E_{64} :
(PSL(2,8) : (\mathbb{Z}_7 : \mathbb{Z}_3))$ & $(9,2,5,0;2)$\\
\hline
\end{tabular}
\caption{Examples of strongly regular $q$-ary graphs from spreads}
\label{table-disjoint}
\end{table}

\end{example}


\section{Quasi-strongly regular $q$-ary graphs}

Quasi-strongly regular graphs were introduced in \cite{QSRGs}, as a generalization of strongly regular graphs. A quasi-strongly regular graph of grade $n$ with parameters $(v,k,a,c_1, \ldots , c_n)$
is a $k$-regular graph with $v$ vertices such that any two adjacent vertices share $a$ common neighbours and any two non-adjacent vertices share $c_i$ common neighbours for some $1 \le i \le n$. 
The most interesting quasi-strongly regular graphs are those of grade 2. 

In the following definition, we introduce the notion of quasi-strongly regular $q$-ary graphs. 

\begin{defi}
A $k$-regular $q$-ary graph $\mathcal{E}$ in $\mathbb{F}_q^v$ is called a quasi-strongly regular $q$-ary graph of grade $n$ with parameters $(v,k,a,c_1, \ldots , c_n;q)$ if for all vertices $x,y$, 
with $x \ne y$, we have that
	\[
	|(N_{\mathcal{E}}(x)\cap N_{\mathcal{E}}(y))\setminus\{ x,y \}|=
	\begin{cases}
	  a                               & \text{if $x$ and $y$ are adjacent,}\\
	  c_i \in \{ c_1, \ldots , c_n \} & \text{otherwise.}
	\end{cases}
	\]
\end{defi}  

As in the case of classical quasi-strongly regular graphs, the most interesting among quasi-strongly regular $q$-ary graphs are those of grade 2. Below we give a construction of transitive quasi-strongly regular $q$-ary graphs of grade 2 coming from spreads.  

\begin{example} \label{quasi}
Let $S$ be a partition of the vector space $\mathbb{F}_q^v$ into $n$ subspaces of dimension $t$. Further, let $\mathcal{E}_1, \ldots, \mathcal{E}_n$ be strongly regular $q$-ary graphs with parameters 
$(t,k,\lambda,\mu ; q)$, such that $\mathcal{E}_i$ is defined on the $i$th element of the partition $S$. 
In that way, we obtain a disjoint union of the $q$-ary graphs $\mathcal{E}_1, \ldots, \mathcal{E}_n$, which is a quasi-strongly regular $q$-ary graph of grade 2 with parameters $(v,k,\lambda,\mu,0;q)$. 
Let the $q$-ary graphs $\mathcal{E}_1, \ldots, \mathcal{E}_n$ be transitive and let all of them be isomorphic to some $\mathcal{E}$. Then the obtained quasi-strongly regular $q$-ary graph, admits a transitive action of the group $Aut(\mathcal{E}). P \Gamma L(\frac{v}{t},q^t)$.
The construction from Example \ref{ex-spread} is a special case of the construction described here, obtained when $\mathcal{E}$ is a complete $q$-ary graph.
\end{example}  

It would be of interest to investigate properties of quasi-strongly regular $q$-ary graphs and find new constructions. Especially, it would be interesting to find constructions of quasi-strongly regular $q$-ary graphs of grade 2 that are not related to spreads. 

\vspace*{0.8cm}

\noindent {\bf Acknowledgement} \\
This work has been fully supported by {\rm C}roatian Science Foundation under the projects 4571 and 5713. 



\end{document}